\documentclass[12pt,twoside]{amsart}
\usepackage{a4wide, amsmath,amsbsy,amsfonts,amssymb,mathabx,
stmaryrd,amsthm,mathrsfs,graphicx, amscd, tikz-cd, cancel}
\usepackage[normalem]{ulem}
\usepackage{soul}
\usetikzlibrary{matrix,arrows,decorations.pathmorphing}
\pagestyle{headings}
\markboth{Chapter}{Section}
\usepackage[active]{srcltx}
\usepackage[
	hypertexnames=false,
	hyperindex,
	pagebackref,  
	pdftex,
	breaklinks=true,
	bookmarks=false,
	colorlinks,
	linkcolor=blue,
	citecolor=red,
	urlcolor=red,
]{hyperref}

\usepackage[all]{xy}
\usepackage{subfig}
\usepackage{tikz}
\usetikzlibrary{shapes,arrows,shadows}
\usetikzlibrary{decorations.markings}
\usepackage{color}
\usepackage[normalem]{ulem}
\usepackage{hyperref}
\usepackage{wrapfig}
\usetikzlibrary{arrows}
\usepackage{pinlabel}
\long\def\symbolfootnote[#1]#2{\begingroup%
\def\thefootnote{\fnsymbol{footnote}}\footnote[#1]{#2}\endgroup}

\newenvironment{customtheorem}[1]
  {\innercustomthm}
  {\endinnercustomthm}

\newtheorem{theorem}{Theorem}[section]

\newtheorem{lemma}[theorem]{Lemma}

\theoremstyle{definition}

\newtheorem{remark}[theorem]{Remark}

\newtheorem*{namedtheorem}{\theoremname}
\newcommand{\theoremname}{testing}

\setcounter{tocdepth}{1}

\DeclareMathSymbol{\Alpha}{\mathalpha}{operators}{"41}
\DeclareMathSymbol{\Beta}{\mathalpha}{operators}{"42}
\DeclareMathSymbol{\Epsilon}{\mathalpha}{operators}{"45}
\DeclareMathSymbol{\Zeta}{\mathalpha}{operators}{"5A}
\DeclareMathSymbol{\Eta}{\mathalpha}{operators}{"48}
\DeclareMathSymbol{\Iota}{\mathalpha}{operators}{"49}
\DeclareMathSymbol{\Kappa}{\mathalpha}{operators}{"4B}
\DeclareMathSymbol{\Mu}{\mathalpha}{operators}{"4D}
\DeclareMathSymbol{\Nu}{\mathalpha}{operators}{"4E}
\DeclareMathSymbol{\Omicron}{\mathalpha}{operators}{"4F}
\DeclareMathSymbol{\Rho}{\mathalpha}{operators}{"50}
\DeclareMathSymbol{\Tau}{\mathalpha}{operators}{"54}
\DeclareMathSymbol{\Chi}{\mathalpha}{operators}{"58}
\DeclareMathSymbol{\omicron}{\mathord}{letters}{"6F}

\newcommand{\N}{\mathbb{N}}
\newcommand{\Z}{\mathbb{Z}}


\def\Aut{\operatorname{Aut}}
\def\Sp{\operatorname{Sp}}

\def\Out{\operatorname{Out}}

\def\PG{\mathrm{P}\Gamma}

\def\cI{{\mathcal I}}

\def\cE{{\mathcal E}}

\def\cK{{\mathcal K}}

\def\td{\tilde}

\def\g{{\gamma}}
\def\G{{\Gamma}}
\def\wG{\widetilde{\Gamma}}

\def\ssm{\smallsetminus}
\def\ol{\overline}

\def\da{\downarrow}
\def\sr{\stackrel}

\def\Ra{\Rightarrow}

\def\co{\colon\thinspace}

\begin{document}
\title[A generating set for the Johnson kernel]{A generating set for the Johnson kernel}

\author{Marco Boggi}
\address{UFF - Instituto de Matem\'atica e Estat\'{\i}stica -
Niter\'oi - RJ 24210-200; Brazil}
\email{marco.boggi@gmail.com}

\begin{abstract}For a connected orientable hyperbolic surface $S$ without boundary and of finite topological type, the Johnson kernel $\cK(S)$ 
is the subgroup of the mapping class group of $S$ generated by Dehn twists about separating simple closed curves on $S$. 
We prove that $\cK(S)$ is generated by the Dehn twists about separating simple closed curves on $S$ bounding either: 
a closed subsurface of genus $1$ or $2$; a closed subsurface of genus $1$ minus one point; a closed disc minus two points.
\vskip 0.2cm
\noindent AMS Mathematics Subject Classification: 57K20; 57K31; 57M07.
\end{abstract}

\maketitle

\section{Introduction}
Let $S=S_{g,n}$ be a closed orientable differentiable surface of genus $g(S)=g$ from which $n(S)=n$ points have been removed. 
We assume that the Euler characteristic $\chi(S)=2-2g-n$ of $S$ is negative. We let $S_g:=S_{g,0}$.
Let then $\G(S)$ and $\PG(S)$ be respectively the mapping class group and the pure mapping class group of the surface $S$.
We will sometimes use the notation $\G_{g,[n]}:=\G(S_{g,n})$ and $\G_{g,n}:=\PG(S_{g,n})$.
The \emph{Johnson kernel} (or \emph{Johnson subgroup}) $\cK(S)$ is the subgroup of $\G(S)$ (and $\PG(S)$) generated by
Dehn twists about separating simple closed curves on $S$. We also let $\cK_{g,n}:=\cK(S_{g,n})$.

Classically (cf.\ \cite{J1}), for a closed surface $S$ or a $1$-pointed surface $(S,P)$ of genus $\geq 2$, the Johnson kernel 
is defined to be the kernel (whence the name) of the natural representation $\G(S)\to\Out(\pi_1(S,P)/\pi_1(S,P)^{[3]})$
(resp.\ $\G(S,P)\to\Aut(\pi_1(S,P)/\pi_1(S,P)^{[3]})$), where, for a group $G$, we denote by $G^{[k]}$ the $k$-th term 
of the lower central series of $G$, i.e.\ $G^{[1]}:=G$ and $G^{[k+1]}:=[G^{[k]},G]$, for all $k\in\N^+$. In \cite[Theorems~5 and 6]{J3}, Johnson
then showed that these kernels coincide with $\cK(S)$ and $\cK(S\ssm P)$, respectively. 

More generally (cf.\ \cite[Theorem~3.13]{mod}), 
$\cK(S)$ is the kernel of the natural representation $\G(S)\to\Out(\pi_1(S,P)/W^3\pi_1(S,P))$, where $W^3\pi_1(S,P)$ is the third term of the
\emph{weight filtration} on the fundamental group of the (punctured) surface $S$ which is defined as follows (cf.\ \cite[Section~2]{mod}).
Let $N$ be the kernel of the natural epimorphism $\pi_1(S,P)\to\pi_1(\ol{S},P)$, where $\ol{S}$ is the closed surface obtained from $S$
filling in the punctures. Then, we define $W^1\pi_1(S,P):=\pi_1(S,P)$; $W^2\pi_1(S,P):=N\cdot[\pi_1(S,P),\pi_1(S,P)]$ and:
\[W^{k+1}\pi_1(S,P):=[W^k\pi_1(S,P),\pi_1(S,P)]\cdot[W^{k-1}\pi_1(S,P), N].\]

The \emph{Torelli group} $\cI(S)$ is the kernel of the natural representation $\G(S)\to\Sp(H_1(\ol{S};\Z))$, where
$\ol{S}$ is the closed surface obtained from $S$ filling in all the punctures. There is an inclusion $\cK(S)\subseteq\cI(S)$.
A deep result by Johnson (cf.\ \cite{J4}) then states that, for $n(S)\leq 1$, the commutator subgroup $[\cI(S),\cI(S)]$ is contained
in $\cK(S)$ as a finite index subgroup and that the (abelian) quotients $\cI(S)/\cK(S)$ and $\cK(S)/[\cI(S),\cI(S)]$ are, respectively, 
a finitely generated free $\Z$-module and a finite $\Z/2$-module.
This description also holds for the case $n(S)> 1$ (cf.\ \cite[Theorem~3.18 and Theorem~3.21]{mod}).

After being an open question for a long time, it is now settled that the Johnson kernel is finitely generated for $n(S)\leq 1$ and $g(S)\geq 4$
(cf.\ \cite[Theorem~1.1]{EH}, \cite[Theorem~A]{CEP}). In \cite[Theorem~1.3]{EF}, a finite (albeit complicated) generating set consisting 
of Dehn twists is given. In this paper, we address a related but different question, namely what topological types of Dehn twists are 
sufficient and necessary in a generating set for the Johnson kernel. 

This problem is motivated by \cite[Theorem~2]{J}, which states that, 
for $g(S)\geq 3$ and $n(S)\leq 1$, the Torelli group $\cI(S)$ is normally generated by a genus $1$ bounding pair map 
(cf.\ \cite[Section~1]{J} for the definition of such mapping class) and by a similar result for the Johnson kernel (cf.\ \cite[Theorem~1]{J}), 
which states that, for $g(S)\geq 2$ and $n(S)\leq 1$, the Johnson kernel $\cK(S)$ is generated by the Dehn twists about separating 
simple closed curves on $S$ bounding either a closed subsurface of genus $1$ or $2$. Here, we prove a more general version of Johnson's result:

\begin{customtheorem}{A}The Johnson kernel $\cK(S)$ is generated by the Dehn twists about separating simple closed curves on $S$
bounding either: a closed subsurface of genus $1$ or $2$; a closed subsurface of genus $1$ minus one point; a closed disc minus two points.
\end{customtheorem}

\begin{remark}The proof of Theorem~A, which we will give in the following section, does not make use of \cite[Theorem~1]{J}
and provides, in particular, a new proof of the closed surface case of this theorem.
\end{remark}

For  $g(S)\geq 2$ and $n(S)\leq 1$, we then show that the set of generators for $\cK(S)$ given by Johnson (cf.\  \cite[Theorem~1]{J}) is optimal 
in the following sense. Given a group $G$ and a normal subgroup $N$ of $G$, we say that a subset $X$ of $G$ is a \emph{normal generating set} 
for $N$ if the set $X$ is closed under conjugation by elements of $G$ and generates the subgroup $N$. We have:

\begin{customtheorem}{B}For $g(S)\geq 2$ and $n(S)\leq 1$, the normal generating set for the Johnson kernel $\cK(S)$ given by the 
Dehn twists about separating simple closed curves on $S$ bounding either a closed subsurface of genus $1$ or $2$ is minimal.
\end{customtheorem}
\bigskip

\noindent
{\bf Acknowledgements.} This manuscript was in part inspired by the conversations I had with Louis Funar during my visit at Institut Fourier, 
Universit\'e Grenoble Alpes in 2025. I thank him for the hospitality, the financial support and the conversations.

\section{Generating the Johnson kernel}
\subsection{A preliminary Lemma}
Let $\Pi_{g,n}:=\pi_1(S_{g,n},P)$ for some choice of base point $P\in S$.
We say that an element of $\Pi_{g,n}$ is \emph{simple} if it contains a simple closed curve.
For two (homotopy classes of) simple closed curves $\alpha$ and $\beta$ on $S$, let us denote by $i(\alpha,\beta)$
their geometric intersection number. 
This is well defined for two elements of the group $\Pi_{g,n}$,
once we fix (as we have done above) an identification of $\Pi_{g,n}$ with the fundamental group of $S$.
For the proof of Theorem~A, we will need the closed surface case of the following lemma:

\begin{lemma}\label{abelianquot}For $g(S)\geq 1$, let us define the following normal subgroup of $\Pi_{g,n}$:
\[N:=\langle\left.[\alpha,\beta]\right|\;\alpha,\beta\mbox{ simple elements of } \Pi_{g,n}\mbox{ such that }i(\alpha,\beta)=1\rangle.\]
The quotient $\Pi_{g,n}/N$ is then an abelian group.
\end{lemma}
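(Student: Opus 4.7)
The plan is to establish that the commutator subgroup $[\Pi_{g,n},\Pi_{g,n}]$ is contained in $N$, which is equivalent to $\Pi_{g,n}/N$ being abelian. I would choose a standard generating set of $\Pi_{g,n}$ consisting of simple elements — namely $a_1,b_1,\dots,a_g,b_g$, supplemented by loops around punctures when $n>0$ — and verify that $[x,y]\in N$ for every pair of generators. The ``diagonal'' commutators $[a_i,b_i]$ are immediately in $N$, since $i(a_i,b_i)=1$.

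For the remaining pairs, the idea is to exhibit, for each target commutator $[x,y]$, a pair of simple closed curves $\mu,\nu$ on $S_{g,n}$ with $i(\mu,\nu)=1$ representing specific words $\mu=x\cdot u$ and $\nu=y\cdot v$ in $\Pi_{g,n}$. From $[\mu,\nu]\in N$, using the identities
\[
[\alpha\beta,\gamma]=\alpha[\beta,\gamma]\alpha^{-1}\cdot[\alpha,\gamma],\qquad
[\alpha,\beta\gamma]=[\alpha,\beta]\cdot\beta[\alpha,\gamma]\beta^{-1},
\]
together with the normality of $N$ and the commutator relations already established, one can then extract $[x,y]\in N$. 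The verification has to be carried out in a specific order so that each step has the commutator relations it relies on available.

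As a concrete illustration, on a closed surface $S_g$ with $g\geq 2$, to prove $[a_i,a_j]\in N$ for $i\neq j$, I would take $\nu=a_j$ and $\mu$ a simple closed curve representing the word $a_ib_j\in\Pi_g$, obtained by concatenating $a_i$ and $b_j$ at the basepoint and smoothing in the standard polygonal model. Then $i(\mu,\nu)=|\langle[a_i]+[b_j],[a_j]\rangle|=1$, so $[a_ib_j,a_j]\in N$, and the identity $[a_ib_j,a_j]=a_i[b_j,a_j]a_i^{-1}\cdot[a_i,a_j]$ combined with $[b_j,a_j]\in N$ yields $[a_i,a_j]\in N$. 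Analogous constructions — e.g.\ curves of homology class $[a_i]+[a_j]$ for $[a_i,b_j]$, class $[b_i]+[a_j]$ for $[b_i,b_j]$, and, on $S_{1,n}$, curves realizing $ap_i$ and $bp_j$ for $[p_i,p_j]$ — handle the other cases.

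The main obstacle is verifying that the required simple closed curves exist representing the \emph{specific} words in $\Pi_{g,n}$, not merely the correct homology classes. The change-of-coordinates principle readily gives a simple curve in any primitive homology class, but pinning down its based homotopy class as a prescribed word requires a careful cyclic-ordering analysis at the basepoint in the polygonal presentation of $S_{g,n}$; different choices of the auxiliary arc from the basepoint to the curve produce elements differing by conjugation. Controlling this conjugation — typically by choosing the arc so that the resulting conjugating element commutes with the relevant generator, or by absorbing the discrepancy into previously established members of $N$ — is the most delicate part of the argument, and the puncture loops (which never form an intersection-one pair with any other simple curve) require a separate, slightly more indirect treatment.
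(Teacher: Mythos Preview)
Your proposal is correct and follows the same strategy as the paper: pick standard simple generators, observe that $[a_i,b_i]\in N$ immediately, and for the off-diagonal pairs use that products such as $a_i b_j$ (or $a_i a_j^{\pm1}$) are simple and have geometric intersection $1$ with a third generator, then deduce the desired commutator lies in $N$.

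Two minor simplifications in the paper's write-up are worth noting. First, rather than manipulating commutator identities in $\Pi_{g,n}$, the paper argues directly in the quotient $\Pi_{g,n}/N$: once $\bar\beta_j$ commutes with $\bar\alpha_i\bar\alpha_j$ and with $\bar\alpha_j$, it commutes with $\bar\alpha_i$ --- no explicit expansion of $[\alpha\beta,\gamma]$ is needed. Second, and more substantively, for $n\geq 2$ the paper sidesteps your ``main obstacle'' with peripheral loops by \emph{not} using them as generators. Instead it completes the generating set with simple elements $\gamma_1,\dots,\gamma_{n-1}$ chosen so that $i(\gamma_i,\alpha_g)=1$ for all $i$ (morally, curves that go once over the $g$-th handle and around some punctures). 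With this choice every generator already intersects some other generator once, and the same product trick used for the $\alpha_i,\beta_j$ handles the $\gamma_i$'s uniformly --- there is no ``separate, slightly more indirect treatment'' required. Your route via words like $ap_i$ would also work, but the paper's choice of generating set makes the punctured case no harder than the closed one.
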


\begin{proof}For simplicity, let us first assume that $n(S)\leq 1$. We will then explain how to modify the argument for the general case.
Since $g(S)\geq 1$ and $n(S)\leq 1$, there is a standard set of generators $\alpha_1,\beta_1,\dots,\alpha_g,\beta_g$ 
for the fundamental group $\Pi_{g,n}$ such that $i(\alpha_i,\beta_j)=\delta_{ij}$ and $i(\alpha_i,\alpha_j)=i(\beta_i,\beta_j)=0$, 
for all $1\leq i,j\leq g$.  

Let us denote by $\bar\alpha_i,\bar\beta_j$ the images of these elements in the quotient $\Pi_{g,n}/N$, for $1\leq i,j\leq g$.
By hypothesis, we already know that $\bar\alpha_i$ and $\bar\beta_i$ commute for all $i=1,\dots,g$. The conclusion follows if we show
that $\bar\alpha_i$ also commutes with $\bar\alpha_j$ and $\bar\beta_j$, for $i\neq j$.

It is easy to check that either $\alpha_i\cdot\alpha_j$ or $\alpha_i\cdot\alpha_j^{-1}$ (resp.\ $\alpha_i\cdot\beta_j$ or $\alpha_i\cdot\beta_j^{-1}$)
is a simple element. Let us then suppose that $\alpha_i\cdot\alpha_j$ (resp.\ $\alpha_i\cdot\beta_j$) has this property.
Since $i(\beta_j,\alpha_i\cdot\alpha_j)=1$ (resp.\ $i(\alpha_j,\alpha_i\cdot\beta_j)=1$, by hypothesis, we have that 
$\bar\beta_j$ commutes with $\bar\alpha_i\cdot\bar\alpha_j$ (resp.\ $\bar\alpha_j$ commutes with $\bar\alpha_i\cdot\bar\beta_j$).
It then follows that $\bar\beta_j$ commutes with $\bar\alpha_i$ (resp.\ $\bar\alpha_j$ commutes with $\bar\alpha_i$), as we had to prove.

For $n:=n(S)\geq 2$, in order to generate the group $\Pi_{g,n}$ we need to add to the above generating set simple elements 
$\g_1,\dots,\g_{n-1}$ such that $i(\g_i,\g_j)=0$, for all $1\leq i,j\leq n-1$, $i(\g_i,\beta_j)=0$, for all $i=1,\dots,n-1$ and 
$j=1,\dots,g$, $i(\g_i,\alpha_j)=0$, for all $i=1,\dots,n-1$ and $j=1,\dots,g-1$, but $i(\g_i,\alpha_g)=1$, for all $i=1,\dots,n-1$. 
Then, arguing essentially as above, we conclude that  the image of this generating set in the quotient $\Pi_{g,n}/N$ again
consists of commuting elements.
\end{proof}

\subsection{Proof of Theorem~A}
Theorem~A trivially holds for $g(S)=0$, $n(S)=3$, for $g(S)=1$, $n(S)\leq 2$ and for $g(S)\leq 5$, $n(S)=0$. 
For the proof of the general case, we then proceed by induction on $g(S)+n(S)$, where the base of the induction is provided 
by the cases $g(S)+n(S)\leq 3$. Let us denote by $(A_{g,n})$ the following statement:

\begin{enumerate}
\item[$(A_{g,n})\co$] Theorem~A holds for $S=S_{g,n}$.
\end{enumerate}

The proof of Theorem~A then reduces to the proof of the following two lemmas:

\begin{lemma}\label{(g,1)g+1}For $g\geq 5$, we have that $(A_{g,1})\Ra(A_{g+1,0})$.
\end{lemma}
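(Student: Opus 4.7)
The plan is to prove that $\cK(S_{g+1,0})$ equals the subgroup $H\subseteq\cK(S_{g+1,0})$ generated by Dehn twists about separating simple closed curves on $S_{g+1}$ that bound a closed subsurface of genus $1$ or $2$. Since $\cK(S_{g+1,0})$ is by definition generated by all separating Dehn twists, it is enough to show $T_d\in H$ for every separating simple closed curve $d$ on $S_{g+1}$.

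First I would exploit the genus assumption to reduce to a subsurface of strictly lower complexity. The curve $d$ cuts $S_{g+1}$ into two compact subsurfaces of genera $h_1\leq h_2$ with $h_1+h_2=g+1\geq 6$, so $h_2\geq 3$. Consequently, inside the larger side of $d$ one can pick a simple closed curve $c$ bounding a one-holed torus $T_c$ disjoint from $d$. By construction $c$ is a genus-$1$ separator of $S_{g+1}$, so $T_c\in H$; moreover $\Sigma:=S_{g+1}\ssm\operatorname{int}(T_c)$ is a genus-$g$ surface with single boundary $c$, which we identify with $S_{g,1}$ by regarding this boundary as a puncture~$p$.

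Since $d$ lies in the interior of $\Sigma$ and remains separating in $S_{g,1}$, we have $T_d\in\cK(S_{g,1})$. Applying the inductive hypothesis $(A_{g,1})$ would then yield an identity in $\G(S_{g,1})$ of the form
\[T_d=\prod_i T_{e_i}^{\epsilon_i},\]
where each $e_i$ belongs to one of the first three topological types listed in Theorem~A (the fourth type, a disc minus two points, cannot occur since $n(S_{g,1})=1$). Lifting this equality through the natural surjection $\G(\Sigma,\dd\Sigma)\tura\G(S_{g,1})$, whose kernel is generated by the boundary twist $T_c$, and pushing forward to $\G(S_{g+1})$ via the inclusion $\Sigma\hookrightarrow S_{g+1}$ that extends mapping classes by the identity on $T_c$, I would then check termwise that the resulting product lies in $H$: a curve $e_i$ bounding in $S_{g,1}$ a closed genus-$1$ or genus-$2$ subsurface bounds in $S_{g+1}$ a subsurface of the same type; a curve $e_i$ bounding in $S_{g,1}$ a once-punctured torus (necessarily containing $p$) bounds in $S_{g+1}$, after absorbing $T_c$, a closed genus-$2$ subsurface; and the correction factor $T_c^k$ introduced by the lift is a power of the genus-$1$ separating twist $T_c$. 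All of these lie in $H$, hence $T_d\in H$, completing the induction.

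I expect the main obstacle to be purely topological bookkeeping: one must identify cleanly the three admissible types of separating curves in $S_{g,1}$ with their images under the inclusion $\Sigma\hookrightarrow S_{g+1}$ (in particular, checking that the genus-$1$-minus-a-point type becomes genus-$2$ after regluing $T_c$), and then keep track of the discrepancy between $\G(\Sigma,\dd\Sigma)$ and $\G(S_{g,1})$ given by the boundary twist $T_c$, which at the level of Johnson kernels is harmless precisely because $T_c$ itself is a genus-$1$ separating twist of $S_{g+1}$, hence already in $H$.
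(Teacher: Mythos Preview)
Your argument is correct and is essentially the paper's own proof: both cut off a genus-$1$ handle along a curve $c$ disjoint from $d$, apply $(A_{g,1})$ on the complementary $S_{g,1}$, observe that the ``once-punctured torus'' type becomes a genus-$2$ separator after regluing, and absorb the $\tau_c$-ambiguity coming from the kernel of the capping map. The only cosmetic difference is that the paper packages the lift/pushforward step via the stabilizer short exact sequence $1\to\tau_\g^\Z\to\G(S_{g+1})_\g\to\G(S')\times\G(S'')\to 1$, whereas you phrase it through $\G(\Sigma,\dd\Sigma)\tura\G(S_{g,1})$ and $\G(\Sigma,\dd\Sigma)\to\G(S_{g+1})$; these are the same mechanism.
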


\begin{proof}As it is customary, for $n(S)\leq 1$, we say that a separating simple closed curve $\g$ on $S$ and the 
associated Dehn twist $\tau_\g$ have genus $h$ if, for $n(S)=1$, the subsurface of $S$ without puncture or, for $n(S)=0$, the subsurface 
of smaller genus bounded by $\g$ has genus $h$. Note that in this terminology a separating simple closed curve on $S_{g,1}$ bounding
a $1$-punctured, genus $1$ closed subsurface of $S$ has genus $g-1$.

For every separating simple closed curve $\delta$ on $S_{g+1}$ of genus $>2$, we have 
to prove that the Dehn twist $\tau_\delta$ is a product of Dehn twists of genus $1$ and $2$. Let then $\g$ be a separating simple closed
curve on $S_{g+1}$ of genus $1$ disjoint from $\delta$. 

Let us denote by $S'$ and $S''$ the connected components of $S_{g+1}\ssm\g$.
The stabilizer $\G(S_{g+1})_\g$ of the isotopy class of $\g$ in the mapping class group $\G(S_{g+1})$ has the property that  
$\tau_\delta\in\G(S_{g+1})_\g$ and fits in the short exact sequence:
\[1\to\tau_\g^\Z\to\G(S_{g+1})_\g\to\G(S')\times\G(S'')\to 1.\]
Let us suppose that $\delta$ is contained in $S'$. We then have that $g(S')=g$ and $n(S')=1$ 
and, by hypothesis, the image of $\tau_\delta$ in $\G(S')$ is a product of Dehn twists of genus $1$, $2$ and $g-1$.
Let us observe now that a simple closed curve on $S'$ which bounds a $1$-punctured genus $1$ closed
subsurface of $S'$ identifies in $S_{g+1}$ with a simple closed curve bounding a genus $2$ subsurface of $S_{g+1}$.
Therefore, $\tau_\delta$ is a product of some power of $\tau_\g$ and a product of Dehn twists of genus $1$ and $2$.
\end{proof}

\begin{lemma}\label{(g,n)(g,n+1)}For $2g-2+n>0$, we have that $(A_{g,n})\Ra(A_{g,n+1})$.
\end{lemma}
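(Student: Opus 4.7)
The plan is to exploit the Birman exact sequence obtained by filling in the new puncture $P$,
\[1 \to \Pi_{g,n} \to \PG(S_{g,n+1}) \xrightarrow{f} \PG(S_{g,n}) \to 1,\]
where $\Pi_{g,n}$ embeds in $\PG(S_{g,n+1})$ via the point-pushing homomorphism $\gamma\mapsto T_\gamma$. Writing $K\subseteq\cK(S_{g,n+1})$ for the subgroup generated by Dehn twists of the four allowed topological types, the identity $K=\cK(S_{g,n+1})$ reduces to two assertions: (a) $f(K)=\cK(S_{g,n})$, and (b) $\ker(f)\cap\cK(S_{g,n+1})\subseteq K$.

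For (a) I invoke the induction hypothesis $(A_{g,n})$: each generator $\tau_\delta$ of $\cK(S_{g,n})$ has $\delta$ bounding a subsurface $X\subset S_{g,n}$ of one of the allowed types, and after isotoping $\delta$ so that $P\in S_{g,n}\ssm X$, the same curve in $S_{g,n+1}$ still bounds the unchanged $X$ on one side, providing a lift in $K$. For (b) I first observe that $K$ is normal in $\G(S_{g,n+1})$, because any conjugate of an allowed Dehn twist is a Dehn twist about a curve bounding a homeomorphic (hence again allowed) subsurface. Using the weight filtration description of $\cK$ (cf.\ \cite[Theorem~3.13]{mod}), any point-push $T_\gamma\in\cK(S_{g,n+1})$ forces $\gamma\in W^3\Pi_{g,n}\subseteq[\Pi_{g,n},\Pi_{g,n}]$; by Lemma~\ref{abelianquot} this commutator subgroup lies in the normal subgroup $N$ of $\Pi_{g,n}$ generated by the commutators $[\alpha,\beta]$ of simple elements with $i(\alpha,\beta)=1$. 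Combined with the normality of $K$ under conjugation by point-pushes $T_h$, assertion (b) reduces to showing $T_{[\alpha,\beta]}\in K$ for each such pair $\alpha,\beta$.

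The final and most delicate step is geometric. A regular neighbourhood $N_{\alpha,\beta}$ of $\alpha\cup\beta$ in $S_{g,n}$ is a compact genus-$1$ subsurface with one boundary component $\delta$, and one has $\delta=[\alpha,\beta]$ in $\pi_1(N_{\alpha,\beta},P)$. I would isotope $\delta$ into the interior of $N_{\alpha,\beta}$ so as to pass through $P$, obtaining a simple closed curve $\delta'$ through $P$ which realises $[\alpha,\beta]$ as a simple based loop at $P$; the standard point-pushing formula for simple based loops then gives
\[T_{[\alpha,\beta]}=T_{\delta'}=\tau_{\delta'_+}\tau_{\delta'_-}^{-1},\]
where $\delta'_{\pm}$ are the two parallel copies of $\delta'$ in $S_{g,n+1}$. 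A direct inspection of the subsurfaces cut off by $\delta'_{\pm}$ (using that $P$ lies on $\delta'$) shows that $\delta'_+$ bounds a closed genus-$1$ subsurface (type~(a)) while $\delta'_-$ bounds a once-punctured torus (type~(c)); both are allowed, whence $T_{[\alpha,\beta]}\in K$. The main obstacle is this last identification: since $[\alpha,\beta]$ is not simple as a based loop, one cannot apply the point-pushing formula directly, and replacing it by the simple representative $\delta'$ together with the bookkeeping of which types of subsurfaces the parallel copies $\delta'_{\pm}$ bound requires careful geometric analysis.
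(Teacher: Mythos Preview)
Your overall strategy is the same as the paper's: use the Birman sequence, lift generators via the induction hypothesis, and reduce the kernel contribution to point-pushes of commutators $[\alpha,\beta]$ via Lemma~\ref{abelianquot}, identifying these geometrically as products of allowed twists. The paper packages this via the quotients $\wG_{g,n}'=\G_{g,n}/\cE_{g,n}$ and a five-lemma argument, but that is cosmetic.

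There is, however, a genuine gap in step~(b). Your claim that $T_\gamma\in\cK(S_{g,n+1})$ forces $\gamma\in W^3\Pi_{g,n}$ is false. By \cite[Theorem~3.12]{mod} (which the paper invokes), the image of $\Pi_{g,n}$ in $\G_{g,n+1}/\cK_{g,n+1}$ is $H_1(S_g;\Z)$, so $\Pi_{g,n}\cap\cK(S_{g,n+1})=\ker\bigl(\Pi_{g,n}\to H_1(S_g;\Z)\bigr)=W^2\Pi_{g,n}$, and for $n\geq 1$ this is strictly larger than $[\Pi_{g,n},\Pi_{g,n}]$: one has $W^2\Pi_{g,n}=N\cdot[\Pi_{g,n},\Pi_{g,n}]$ with $N=\ker(\Pi_{g,n}\to\Pi_g)$. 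Concretely, a simple based loop $u_i$ around a puncture $P_i$ pushes to a $2$-punctured-disc twist, hence lies in $\cK(S_{g,n+1})$, yet $u_i\notin[\Pi_{g,n},\Pi_{g,n}]$.

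The repair is exactly what the paper does: observe that each $T_{u_i}$ is an allowed $2$-punctured-disc twist, so by normality of $K$ all of $N$ already maps into $K$; equivalently, $\Pi_{g,n}\to\G_{g,n+1}/K$ factors through $\Pi_g$. One then applies Lemma~\ref{abelianquot} to $\Pi_g$, and your geometric identification of $T_{[\alpha,\beta]}=\tau_{\delta'_+}\tau_{\delta'_-}^{-1}$ with allowed twist types goes through. (Your final worry is also unnecessary: for simple $\alpha,\beta$ with $i(\alpha,\beta)=1$, the based class $[\alpha,\beta]\in\Pi_{g,n}$ \emph{does} contain a simple based loop, namely the boundary of a regular neighbourhood of $\alpha\cup\beta$ pushed through the basepoint.)
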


\begin{proof}Let $\cE_{g,n}$ be the normal subgroup of $\G_{g,n}$ (and of the Johnson kernel $\cK_{g,n}$) 
generated by the Dehn twists about simple closed curves on $S$ which bound either a genus $1$ subsurface with at most one puncture, 
an unpunctured genus $2$ subsurface, or a $2$-punctured disc.

Let $\wG_{g,n}:=\G_{g,n}/\cK_{g,n}$ and $\wG_{g,n}':=\G_{g,n}/\cE_{g,n}$. Since $\cE_{g,n}\subseteq\cK_{g,n}$,
there is a natural epimorphism $\wG_{g,n}'\to\wG_{g,n}$.
The statement of Lemma~\ref{(g,n)(g,n+1)} can then be reformulated as the statement that, if the epimorphism $\wG_{g,n}'\to\wG_{g,n}$
is an isomorphism, the same is true for the epimorphism $\wG_{g,n+1}'\to\wG_{g,n+1}$.

Let us label by $P_1,\dots,P_{n+1}$ the punctures on $S_{g,n+1}$,
let $q\co\G_{g,n+1}\to\G_{g,n}$ be the epimorphism induced filling in the last puncture with a point which we also label by $P_{n+1}$ and
let $\Pi_{g,n}:=\pi_1(S_{g,n},P_{n+1})$. There is then a short exact sequence (the Birman exact sequence):
\begin{equation}\label{Birman}
1\to\Pi_{g,n}\sr{p}{\to}\G_{g,n+1}\sr{q}{\to}\G_{g,n}\to 1.
\end{equation}

By \cite[Theorem~3.12]{mod}, for $2g-2+n>0$, the Birman exact sequence~\eqref{Birman} induces on the quotients $\wG_{g,n}$ 
a short exact sequence:
\[1\to H_1(S_g;\Z)\sr{\tilde{p}}{\to}\wG_{g,n+1}\sr{\tilde{q}}{\to}\wG_{g,n}\to 1.\]
The Birman exact sequence~\eqref{Birman} also implies that, for $2g-2+n>0$, the quotients $\wG_{g,n}'$ fit in the exact sequence:
\[\Pi_{g,n}\sr{\bar p}{\to}\wG_{g,n+1}'\sr{\tilde{q}'}{\to}\wG_{g,n}'\to 1.\]

The push map $p\co\Pi_{g,n}\to\G_{g,n+1}$ maps a simple element of $\Pi_{g,n}$ which contains a simple closed curve 
bounding the puncture $P_i$ to a Dehn twist about a simple closed curve which bounds a disc containing only the punctures
labeled by $P_i$ and $P_{n+1}$, for $i=1,\dots,n$. By the definition of the subgroup $\cE_{g,n+1}$, the induced homomorphism 
$\bar p\co\Pi_{g,n}\to\wG_{g,n+1}'$ then factors through a homomorphism $\bar p'\co\Pi_{g}\to\wG_{g,n+1}'$, 
where we let $\Pi_g:=\pi_1(S_g,P_{n+1})$.

Let $\alpha,\beta\in\Pi_g$ be simple elements such that $i(\alpha,\beta)=1$. The commutator $[\alpha,\beta]$
contains a $P_{n+1}$-pointed oriented simple closed curve $\g$ on $S_g$ which bounds a genus $1$ subsurface of $S_g$ and
there is a $P_{n+1}$-pointed oriented simple closed curve $\td\g$ on $S_{g,n}$ which bounds an unpunctured genus $1$ 
subsurface of $S_{g,n}$ and whose image in $S_g$ is contained in the equivalence class $[\alpha,\beta]\in\Pi_g$.

The push map $p\co\Pi_{g,n}\to\G_{g,n+1}$ maps the equivalence class $[\td\g]$ of $\td\g$ in $\Pi_{g,n}$ to a product 
$\tau_{\td\g^+}\tau_{\td\g^-}^{-1}$, where one of the two simple closed curves $\td\g^+$ and $\td\g^-$ bounds an unpunctured genus $1$ 
subsurface of $S_{g,n+1}$ and the other a genus $1$ subsurface of $S_{g,n+1}$ with just one puncture labeled by $P_{n+1}$. 
From the definition of the subgroup $\cE_{g,n+1}$, it follows that $p([\td\g])\in\cE_{g,n+1}$, so that the image $\bar p'([\alpha,\beta])$ 
is trivial in the quotient $\wG_{g,n+1}'$.

By Lemma~\ref{abelianquot}, we then have that the homomorphism $\bar p'\co\Pi_{g}\to\wG_{g,n+1}'$ factors through
a homomorphism $\td{p}'\co H_1(S_g;\Z)\to\wG_{g,n+1}'$. In conclusion, the Birman exact sequence~\eqref{Birman} 
induces a commutative diagram with exact rows:
\begin{equation}\label{diagfund}\begin{array}{ccccccc}
&H_1(S_g;\Z)&\sr{\tilde{p}'}{\to}&\wG_{g,n+1}'&\sr{\tilde{q}'}{\to}&\wG_{g,n}'&\to 1\,\\
&\da&&\da&&\da&\\
1\to&H_1(S_g;\Z)&\sr{\tilde{p}}{\to}&\wG_{g,n+1}&\sr{\tilde{q}}{\to}&\wG_{g,n}&\to 1,
\end{array}\end{equation}
where the vertical maps are all surjective and, by hypothesis, the right hand map is an isomorphism. 
Since an epimorphism of a finitely generated free $\Z$-module to itself is an isomorphism, the left hand vertical map is an isomorphism as well. 
From the five lemma, applied to the commutative diagram~\eqref{diagfund}, it then follows that the middle map in the diagram is an isomorphism.
\end{proof}

\section{Minimality}
In this section, we will prove Theorem~B. For $g(S)=2$, $n(S)\leq 1$ and $g(S)=3$, $n(S)=0$, the statement of the theorem is trivial,
since all separating Dehn twists have genus $1$.
We will then consider first the case $g(S)\geq 4$, $n(S)\leq 1$ of the theorem and then the remaining case $g(S)=3$, $n(S)=1$.

\subsection{Proof of Theorem~B for $g(S)\geq 4$}The natural homomorphism $\cK_{g,1}\to\cK_g$ maps genus $1$
and genus $2$ Dehn twists to Dehn twists of the same genus for all  $g\geq 4$ (this is not true for $g=3$).
Hence, in this case, it is enough to prove the case $n=0$ of the theorem. 

Let us recall that in \cite[Section~5]{M1}, for $g\geq 2$, Morita defined a 
$\G_{g}$-equivariant homomorphism $d_0\co\cK_{g}\to\Z$, where the mapping class group acts by conjugation on the domain 
and trivially on the codomain, which maps a Dehn twist of genus $h$ to $h(g-h)\in\Z$ and so maps Dehn twists of genus $1$ to
$g-1$ and Dehn twists of genus $2$ to $2(g-2)$. 

In order to show that Dehn twists of genus $1$ cannot be removed from the given normal generating set, it is 
then enough to consider the reduction of the homomorphism $d_0$ modulo $g-2$ which sends Dehn twists of genus $1$ to
the element $\bar 1=\ol{g-1}\in\Z/(g-2)$ and Dehn twists of genus $2$ to the element $\bar 0=\ol{2(g-2)}\in\Z/(g-2)$. 

In order to show that Dehn twists of genus $2$ cannot be removed from the given normal generating set, we instead consider 
the reduction of the homomorphism $d_0$ modulo $g-1$ which sends Dehn twists of genus $1$ to the element $\bar 0\in\Z/(g-1)$ and 
Dehn twists of genus $2$ to the element $\bar 2=\ol{2(g-2)}\in\Z/(g-1)$ which is not zero for $g\geq 4$. 

\subsection{Proof of Theorem~B for $g(S)=3$, $n(S)=1$}Let $\G_g^1$, for $g\geq 2$, be the mapping class group of a closed surface
$S_g$ relative to a closed disc $D$ on $S_g$. There is a natural epimorphism $\G_g^1\to\G_{g,1}$ whose kernel is generated by the Dehn 
twist about the boundary of $D$ (which is a nontrivial element of $\G_g^1$). Let then $\cK_g^1$ be the subgroup of $\G_g^1$ generated 
by separating Dehn twists. In \cite[Section~5]{M1}, for $g\geq 2$, Morita also defines two $\G_{g}^1$-equivariant homomorphisms 
$d\co\cK_g^1\to\Z$ and $d'\co\cK_g^1\to\Z$, where the mapping class group acts by conjugation on the domain and trivially on 
the codomain, which map a Dehn twist of genus $h$, respectively, to $4h(h-1)$ and $h(2h+1)$.

For $g(S)=3$, the reduction of the homomorphism $d$ modulo $24$ then induces the $\G_{3,1}$-equivariant homomorphism
$\bar d\co\cK_{3,1}\to\Z/24$ which maps Dehn twists of genus $1$ to $\bar 0$ and Dehn twists of genus $2$ to $\bar 8$.
This shows that the Dehn twists of genus $2$ cannot be removed from the given normal generating set.

Let us then assume by contradiction that $\cK_{3,1}$ is generated by Dehn twists of genus $2$. This implies that $\cK_{3}^1$ is 
generated by Dehn twists of genus $2$ and $3$.
Let us  consider the direct sum homomorphism $ d\oplus d'\co\cK_3^1\to\Z\oplus\Z$ which maps Dehn twists of genus $1$
to $(0,3)$, Dehn twists of genus $2$ to $(8,10)$ and Dehn twists of genus $3$ to $(24,21)$.
A Dehn twist of genus $1$ can be expressed as a product of Dehn twists of genus $2$ and $3$ only if there exist 
$x,y\in\Z$ such that $(0,3)=x(8,10)+y(24,21)$. However, it is easy to check that the given system of linear equations has 
no integral solutions. This implies that $\cK_{3,1}$ is not generated by Dehn twists of genus $2$ and proves the theorem also in this case.

\end{document}